\documentclass[reqno,12pt]{amsart}
\usepackage{a4wide, color, eucal, enumerate, mathrsfs, xcolor}
\usepackage{amsmath, amssymb, epsfig, amsthm}
\usepackage{mathtools}
\usepackage{lipsum}
\usepackage[english]{babel} 
\usepackage[T1]{fontenc}
\usepackage[utf8]{inputenc}
\usepackage{hyperref}
\hypersetup{hidelinks, urlcolor=blue, pdfstartview=FitH, pdfpagemode=UseOutlines, pdfnewwindow, breaklinks}
\PassOptionsToPackage{unicode}{hyperref}
\usepackage{bookmark, cleveref, float, multirow}

\definecolor{myRed}{HTML}{da4939}
\let\subset\subseteq

\let\phi\varphi
\let\epsilon\varepsilon
\renewcommand{\:}{\colon}
\newcommand{\smallbullet}{\raisebox{0.15em}{\scalebox{0.5}{$\bullet$}}}
\newcommand{\wto}{\rightharpoonup}

\newcommand{\Id}{\operatorname{Id}}
\newcommand{\N}{\mathbb{N}}
\newcommand{\R}{\mathbb{R}}
\newcommand{\Q}{\mathbb{Q}}
\newcommand{\Z}{\mathbb{Z}}
\providecommand{\qm}[1]{``#1''}
\newcommand{\LN}{($N$) } 
\newcommand{\lin}{\operatorname{\pounds}}
\DeclareMathOperator{\dist}{dist}
\newcommand{\dx}{\, \mathrm{d}x}
\newcommand{\dt}{\, \mathrm{d}t}

\newcommand{\INV}{(INV) }

\newcommand{\step}[2]{\vspace{3mm}%
\belowpdfbookmark{\the\value{section}.#1 #2}{\the\value{section}.#1}%
\noindent \textbf{Step #1:} \textsl{#2}}

\setlength{\oddsidemargin}{0cm}
\setlength{\evensidemargin}{0cm}
\setlength{\textwidth}{16cm} \setlength{\textheight}{23cm}

\numberwithin{equation}{section}
\counterwithout{figure}{section}
\newtheorem{theorem}{Theorem}[section]

\newtheorem{lemma}[theorem]{Lemma}

\newtheorem{definition}[theorem]{Definition}

\theoremstyle{remark}

\makeatletter
\newcommand*{\mint}[1]{%
  \mint@l{#1}{}%
}
\newcommand*{\mint@l}[2]{%
  \@ifnextchar\limits{%
    \mint@l{#1}%
  }{%
    \@ifnextchar\nolimits{%
      \mint@l{#1}%
    }{%
      \@ifnextchar\displaylimits{%
        \mint@l{#1}%
      }{%
        \mint@s{#2}{#1}%
      }%
    }%
  }%
}
\newcommand*{\mint@s}[2]{%
  \@ifnextchar_{%
    \mint@sub{#1}{#2}%
  }{%
    \@ifnextchar^{%
      \mint@sup{#1}{#2}%
    }{%
      \mint@{#1}{#2}{}{}%
    }%
  }%
}
\def\mint@sub#1#2_#3{%
  \@ifnextchar^{%
    \mint@sub@sup{#1}{#2}{#3}%
  }{%
    \mint@{#1}{#2}{#3}{}%
  }%
}
\def\mint@sup#1#2^#3{%
  \@ifnextchar_{%
    \mint@sup@sub{#1}{#2}{#3}%
  }{%
    \mint@{#1}{#2}{}{#3}%
  }%
}
\def\mint@sub@sup#1#2#3^#4{%
  \mint@{#1}{#2}{#3}{#4}%
}
\def\mint@sup@sub#1#2#3_#4{%
  \mint@{#1}{#2}{#4}{#3}%
}
\newcommand*{\mint@}[4]{%
  \mathop{}%
  \mkern-\thinmuskip
  \mathchoice{%
    \mint@@{#1}{#2}{#3}{#4}%
        \displaystyle\textstyle\scriptstyle
  }{%
    \mint@@{#1}{#2}{#3}{#4}%
        \textstyle\scriptstyle\scriptstyle
  }{%
    \mint@@{#1}{#2}{#3}{#4}%
        \scriptstyle\scriptscriptstyle\scriptscriptstyle
  }{%
    \mint@@{#1}{#2}{#3}{#4}%
        \scriptscriptstyle\scriptscriptstyle\scriptscriptstyle
  }%
  \mkern-\thinmuskip
  \int#1%
  \ifx\\#3\\\else_{#3}\fi
  \ifx\\#4\\\else^{#4}\fi
}
\newcommand*{\mint@@}[7]{%
  \begingroup
    \sbox0{$#5\int\m@th$}%
    \sbox2{$#5\int_{}\m@th$}%
    \dimen2=\wd0 %
    \let\mint@limits=#1\relax
    \ifx\mint@limits\relax
      \sbox4{$#5\int_{\kern1sp}^{\kern1sp}\m@th$}%
      \ifdim\wd4>\wd2 %
        \let\mint@limits=\nolimits
      \else
        \let\mint@limits=\limits
      \fi
    \fi
    \ifx\mint@limits\displaylimits
      \ifx#5\displaystyle
        \let\mint@limits=\limits
      \fi
    \fi
    \ifx\mint@limits\limits
      \sbox0{$#7#3\m@th$}%
      \sbox2{$#7#4\m@th$}%
      \ifdim\wd0>\dimen2 %
        \dimen2=\wd0 %
      \fi
      \ifdim\wd2>\dimen2 %
        \dimen2=\wd2 %
      \fi
    \fi
    \rlap{%
      $#5%
        \vcenter{%
          \hbox to\dimen2{%
            \hss
            $#6{#2}\m@th$%
            \hss
          }%
        }%
      $%
    }%
  \endgroup
}
\makeatother
\newcommand{\aint}{\mint{-}}

\begin{document}
\title{Weak limits of Sobolev homeomorphisms \linebreak are one to one} 

\author{Ondřej Bouchala}
\address{Czech Technical University in Prague, Faculty of Information Technology, Thákurova~9, 160 00 Prague 6, Czech Republic}
 \email{\tt ondrej.bouchala@gmail.com}

\author{Stanislav Hencl}
\address{Department of Mathematical Analysis, Charles University,
So\-ko\-lovsk\'a 83, 186~00 Prague 8, Czech Republic}
\email{\tt hencl@karlin.mff.cuni.cz}

\author{Zheng Zhu}
\address{School of Mathematical Science\\
Beihang University\\   
Changping District Shahe Higher Education Park South Third Street No. 9\\
Beijing 102206\\
P. R. China}
\email{\tt zhzhu@buaa.edu.cn}

\keywords{limits of Sobolev homeomorphisms, topological degree, linking number} 
\thanks{S. Hencl was supported by the grant GA\v CR P201/24-10505S. Z. Zhu was supported by the NSFC grant (No. 12301111) and the starting grant from Beihang University (ZG216S2329). This research was done when Z.\ Zhu was visiting the
 Department of Mathematical Analysis, Faculty of Mathematics and Physics, Charles University. He wishes to thank Charles University for its hospitality.}
\subjclass[2000]{46E35}

\begin{abstract}
	We prove that the key property in models of Nonlinear Elasticity which corresponds to the non-interpenetration of matter, i.e.\ injectivity a.e., can be achieved in the class of weak limits of homeomorphisms under very minimal assumptions. 
 	
 Let $\Omega\subset \R^n$ be a domain and let $p>\left\lfloor\frac{n}{2}\right\rfloor$ for $n\geq 4$ or $p\geq 1$ for $n=2,3$. Assume that $f_k\in W^{1,p}$ is a sequence of homeomorphisms such that $f_k\rightharpoonup f$ weakly in $W^{1,p}$ and assume that $J_f>0$ a.e. Then we show that $f$ is injective a.e.
\end{abstract}

\maketitle

\section{Introduction}

In this paper, we study classes of mappings that might serve as deformations in models of Nonlinear Elasticity. Let $\Omega\subset\R^n$ be a domain that corresponds to our body and let $f\colon\Omega\to\R^n$ be a mapping that describes the deformation of the body. 
Following the pioneering papers of Ball \cite{B1981} and Ciarlet and Ne\v{c}as \cite{CN1987} we ask if our mapping is in some sense injective as the physical \qm{non-interpenetration of matter} asks a deformation to be one-to-one.

Let us recall some known results about the injectivity, or at least injectivity almost everywhere of the mapping $f$.
Following Ball \cite{B1981} we can require that the energy functional $\int_{\Omega} W(Df)$ contains special terms (like ratio of powers of $Df$, $\operatorname{adj} Df$ and $J_f$)
and any mapping with finite energy and reasonable boundary data is a homeomorphism (the reader is referred to e.g.\ \cite{IS1993, VM1998} and \cite{S1988} for related results).

In the results like Ball \cite{B1981}, we need to know that our mapping is continuous everywhere, but in some real-world deformations cavitation or even fractures may occur. To allow such a behavior in our models we do not require injectivity of our mapping, but only injectivity almost everywhere. 
Ciarlet and Ne\v{c}as \cite{CN1987} studied the class of mappings that satisfy
\begin{equation}\label{cond:Ciarlet-Necas}
\int_{\Omega} J_f\leq |f(\Omega)|
\end{equation}
together with $J_f>0$ a.e.\ and they showed that the mappings from this class are injective a.e.\ in the image, see e.g.\ \cite{B2002, BHM2020, MTY1994, T1988} for further results in this direction.
The inequality \eqref{cond:Ciarlet-Necas} is nowadays called the Ciarlet--Nečas condition.
The assumption $J_f>0$ a.e.\ is very natural in models of Nonlinear Elasticity as the \qm{real deformation} cannot change its orientation and we cannot compress our material too much, so the set $\{J_f=0\}$ must be small.

Another approach can be traced to M\"uller and Spector \cite{MS1995}, where they studied a~class of mappings that satisfy $J_f>0$ a.e.\ together with the \INV condition (see e.g.\ \cite{BHM2020,CD2003,HMC2012,MST1996}).
Besides many other interesting results they showed that mappings in their class are injective a.e. Informally speaking, the \INV condition means that the ball $B(x,r)$ is mapped inside the image of the sphere $f(S(a,r))$ and the complement $\Omega\setminus \overline{B(x,r)}$ is mapped outside of $f(S(a,r))$.

In this paper we study injectivity a.e.\ in the class of weak limits of Sobolev homeomorphisms. The natural discontinuous mappings like cavitation belong to this class and this class (with uniformly bounded $\int_{\Omega} |Df|^p$) is weakly closed, which makes it a suitable class for the approach of Calculus of Variations (see e.g.\ \cite{DHM2023} and \cite{DHM2024}). 
Let us note that the class of weak limits of Sobolev homeomorphisms was recently characterized in the planar case by Iwaniec and Onninen \cite{IO2016, IO2017} and De Philippis and Pratelli \cite{DPP2020} (see also Campbell \cite{C2024}). 

Let us recall that weak limits of homeomorphisms in $W^{1,p}$, $p>n-1$, with $J_f>0$ a.e.\ are injective a.e., as they obviously satisfy the \INV condition, the \INV condition is stable under weak convergence and \INV mappings with $J_f>0$ a.e.\ are injective a.e.\ (see~\cite{MS1995}). 
In the recent paper, Bouchala, Hencl and Molchanova proved that \cite[Theorem~1.1]{BHM2020} there is an example of a continuous mapping which is even a strong limit of $W^{1,n-1}$ homeomorphisms, $n\geq 3$, such that there is a set of positive measure $|K|>0$ so that $|f^{-1}(K)|>0$ and $f^{-1}(y)$ is a continuum for every $y\in K$. However, this example does not contradict injectivity a.e.\ as one can divide each $f^{-1}(y)$ into the \qm{endpoint} $e_y$ and the rest $f^{-1}(y)\setminus \{e_y\}$ and the set
$$
\bigcup_{y\in K}f^{-1}(y)\setminus \{e_y\}
$$

has zero measure, so our map is indeed injective outside of this set. Another pathological example of weak limits of $W^{1,n-1}$ homeomorphisms can be found in Conti and De Lellis \cite[Theorem~6.3]{CD2003} (see also \cite[Theorem 1.2]{DHM2023}), where the limit mapping can change orientation on some set and it can fail the \INV condition (and thus it maps something inside of a~ball to outside of the topological image of the corresponding sphere), but it is still injective a.e.  
We wanted to know if this is a general phenomenon and if each weak limit of $W^{1,n-1}$ homeomorphisms must always be injective a.e. Somewhat surprisingly we can prove such a positive result under much weaker Sobolev regularity. 

\begin{theorem}\label{main} 
Let $n\geq 2$, $\Omega\subset\R^n$ be a domain and let $p>\left\lfloor\frac{n}{2}\right\rfloor$ for $n\geq 4$ or $p\geq 1$ for $n=2,3$. Let $f_k\in W^{1,p}(\Omega,\R^n)$ be a sequence of homeomorphism such that $f_k\rightharpoonup f$ weakly in $W^{1,p}$ and assume that $J_f>0$ a.e. 

Then $f$ is injective a.e., i.e.\ there is a set $N\subset \Omega$ with $|N|=0$ such that $f|_{\Omega\setminus N}$ is injective. 
\end{theorem}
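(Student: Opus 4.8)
I would argue by contradiction, the main tool being \emph{linking numbers of spheres of complementary dimension} rather than Brouwer degree on $(n-1)$-spheres: when $p\le n-1$ the traces of the $f_k$ on $(n-1)$-spheres are not controlled, so the \INV/degree machinery of Müller--Spector is unavailable, but the weak form of non-interpenetration that survives is \qm{a homeomorphism preserves the linking number of disjoint cycles}, and linking numbers of cycles of dimensions $d_1+d_2=n-1$ are robust under exactly the convergence weak $W^{1,p}$ gives on generic slices when $p>\left\lfloor\frac n2\right\rfloor$. So suppose $f$ is not injective a.e. Since $J_f>0$ a.e., $f$ is approximately differentiable a.e.\ with positive Jacobian, and a decomposition/Lusin argument converts the failure of a.e.\ injectivity into: there exist disjoint measurable sets $A_1,A_2\subset\Omega$ of positive measure with $f|_{A_1},f|_{A_2}$ injective and $|f(A_1)\cap f(A_2)|>0$. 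Passing to density points of $A_1,A_2,f(A_1)\cap f(A_2)$ and using Egorov, I would extract points $a\ne b$ in $\Omega$, a target point $y_0$ and a scale $\rho_0>0$ such that for all small $r$ the limit $f$ \qm{covers a neighbourhood of $y_0$ twice}, one sheet clustering near $a$ and one near $b$; fix disjoint balls $\overline{B(a,R)},\overline{B(b,R)}\subset\Omega$.

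\textbf{Step 2 (good slices and uniform convergence).} After passing to a subsequence (and composing all $f_k$ with a fixed reflection if needed, which does not affect the argument) I may assume every $f_k$ is an orientation-preserving homeomorphism onto an open set, hence preserves linking numbers. Set $d_1=\left\lfloor\frac n2\right\rfloor$ and $d_2=n-1-d_1$, so $d_1+d_2=n-1$ and, for $n\ge4$, $\max\{d_1,d_2\}=\left\lfloor\frac n2\right\rfloor<p$. By a Fubini-type slicing argument, for a.e.\ $r\in(0,R)$ and a.e.\ round $d_1$-subsphere $\sigma\subset\partial B(a,r)$ and a.e.\ round $d_2$-subsphere $\tau$ positioned near $b$, the traces $f_k|_\sigma,f_k|_\tau$ are (along a subsequence) bounded in $W^{1,p}$ of the slice and converge weakly there to $f|_\sigma,f|_\tau$. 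On a slice of dimension $\ge2$ the compact embedding $W^{1,p}\hookrightarrow C^0$ (valid since $p>\left\lfloor\frac n2\right\rfloor\ge d_i$) upgrades this to uniform convergence; on a $1$-dimensional slice, a.e.\ slice carries $f_k$ of uniformly bounded length (Fubini again, $p\ge1$), so after arclength reparametrisation Arzel\`a--Ascoli gives uniform convergence along a subsequence; $0$-dimensional slices ($n=2$) are just pairs of density points. In all cases $f_k|_\sigma\to f|_\sigma$ and $f_k|_\tau\to f|_\tau$ uniformly; this is precisely the mechanism that makes $p>\left\lfloor\frac n2\right\rfloor$ (resp.\ $p\ge1$ for $n=2,3$) the natural threshold.

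\textbf{Step 3 (the linking contradiction).} Because $\sigma$ and $\tau$ lie in disjoint balls, a separating hyperplane exhibits $\sigma$ as the boundary of a small $(d_1{+}1)$-disc avoiding $\tau$; since $f_k$ is an orientation-preserving homeomorphism, $f_k(\sigma)$ bounds a disc avoiding $f_k(\tau)$, so $\operatorname{lk}(f_k(\sigma),f_k(\tau))=0$ for every $k$. On the other hand I would use the twofold covering of the neighbourhood of $y_0$ from Step 1 to choose $r,\sigma,\tau$ so that the limit cycles $f(\sigma)$ and $f(\tau)$ are disjoint, of complementary dimensions, and \emph{genuinely linked}, $\operatorname{lk}(f(\sigma),f(\tau))=\pm1$ — informally, $f(\sigma)$ is a \qm{large} $d_1$-sphere whose bounded $(d_1{+}1)$-disc passes through $y_0$, while $f(\tau)$ is a \qm{small} $d_2$-sphere threading that disc (for $n=2$: $f(\sigma)$ encircles $y_0$, while $f(\tau)$ is a point pair with one point near $y_0$, inside $f(\sigma)$, and one far, outside). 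Since $f_k|_\sigma\to f|_\sigma$ and $f_k|_\tau\to f|_\tau$ uniformly and $\dist(f(\sigma),f(\tau))>0$, the linking number passes to the limit, giving $0=\pm1$, a contradiction. Hence $f$ is injective a.e.

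\textbf{Main obstacle.} I expect Step 3 to be the real difficulty: turning the purely measure-theoretic overlap $|f(A_1)\cap f(A_2)|>0$ into an \emph{honest} nonzero linking number between explicitly chosen images $f(\sigma),f(\tau)$ — i.e.\ locating, inside the two \qm{sheets}, cycles whose topological entanglement is forced and survives the weak limit, together with the correct placement of $\tau$. This is exactly where $J_f>0$ a.e.\ is indispensable (so that both sheets occupy positive measure and local degrees stay nonnegative), and where the dimensional bookkeeping $d_1+d_2=n-1$ is tied to the Sobolev threshold $p>\left\lfloor\frac n2\right\rfloor$ needed for $C^0$-control on the slices carrying the linking pairing. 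The subordinate technical points are the simultaneous slicing in Step 2 (both slice families good at once, with compatible geometry) and the failure of the Lusin (N) property for $f$, which forces all the topology to be carried out with the homeomorphisms $f_k$ and only then transported to the limit.
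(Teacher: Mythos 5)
Your strategy coincides with the paper's (contradiction; two positive-measure sets with coinciding images; slicing into cycles of complementary dimensions $d_1+d_2=n-1$ with $\max d_i\le\left\lfloor\frac n2\right\rfloor<p$ so that $W^{1,p}$ on the slices embeds into $C^0$; homeomorphisms cannot map unlinked cycles to linked ones), but there is a genuine gap precisely at the point you yourself label \qm{the main obstacle}: you never produce cycles $\sigma,\tau$ for which the images are actually linked, you only assert that the \qm{twofold covering of a neighbourhood of $y_0$} should allow such a choice. This is the core of the theorem and it is not routine: $f$ need not be continuous or satisfy the Lusin \LN condition, so the measure-theoretic overlap $|f(A_1)\cap f(A_2)|>0$ carries no topological structure by itself, and there is no a priori reason that the image of any sphere near $a$ passes near $y_0$ in a controlled way, that $f(\sigma)$ and $f(\tau)$ are disjoint, or that a $(d_1{+}1)$-disc spanning $f(\sigma)$ is threaded by $f(\tau)$. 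The paper closes exactly this gap by a chain of reductions you do not have: a Lusin-type lemma giving disjoint sets $A,B$ with $f(A)=f(B)$ \emph{and} the hereditary property that every positive-measure subset of $A$ is matched by one of $B$ (this is what legitimizes all later shrinkings); restriction to points of approximate differentiability and Lebesgue points of $Df$ with $J_f>0$; selection of nearly constant rational derivative matrices $M_A,M_B$ and composition with an explicit bi-Lipschitz map plus a blow-up so that without loss of generality $Df\approx\Id$, $f(a)=f(b)=0$, and $f$ is $L^1$-close to $x\mapsto x-a$ on the unit cube around $a$ and to $x\mapsto x-b$ around $b$. Only after this normalization does one choose two explicit annular families of square curves with transverse orientations whose translates to a common cube are linked; Fubini then selects individual curves on which $f$ (and, via uniform convergence, $f_k$) is uniformly close to these rigid motions, so the images under the injective maps $f_k$ are forced near an explicitly linked model pair, while the preimages are unlinked. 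Nothing in your Steps~1--3 substitutes for this normalization-plus-explicit-geometry argument; the picture of a \qm{large} sphere through $y_0$ threaded by a \qm{small} sphere is not a construction.

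A second, smaller but real, defect is your treatment of the one-dimensional slices when $p=1$ ($n=2,3$): uniformly bounded length of $f_k$ on a.e.\ curve plus arclength reparametrization does \emph{not} give $f_k|_\sigma\to f|_\sigma$ uniformly --- the reparametrizations vary with $k$, and bounded length still permits excursions of definite size (a detour of diameter $\tfrac1{10}$ costs length only $\tfrac15$), which can destroy or even reverse the linking of the image curves. The paper handles $p=1$ by a different mechanism: weak $L^1$ convergence of $Df_k$ gives uniform integrability (de la Vall\'ee Poussin), and then either $\int_{U\cap L}|Df_k|$ is small on the chosen curve, where $U$ is the set where $f_k$ is far from $f$ --- so $f_k$ stays close to $f$ on the whole curve and the linking contradiction goes through --- or this fails on a large family of curves, contradicting uniform integrability because $|U\cap\mathbf{L}_A|$ is small. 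Some smallness (not mere boundedness) of $\int|Df_k|$ over the bad set is indispensable, so your Step~2 would need to be replaced by an argument of this kind; for $p>1$ your compact-embedding mechanism on the slices agrees with the paper's.
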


Let us note that the assumption $J_f>0$ a.e.\ is necessary as without it we would be e.g.\ able to squash some ball into a point and injectivity a.e.\ would obviously fail. 
To prove this result under such a minimal regularity assumption we use the topological notion of the linking number, which was previously used in the context of Sobolev homeomorphisms and their limits in \cite{HM2010}, \cite{HO2016} and \cite{GH2019}. 
We would like to know if the assumption $p>\left\lfloor\frac{n}{2}\right\rfloor$ is sharp and if there are counterexamples to injectivity a.e.\ below this regularity. 

Let us briefly explain the main idea of the proof in the case $n=3$, which is nicely captured in Fig.\ \ref{fig:LinkedSquares} below. For the contradiction, we assume that there are two sets $A$~and~$B$ of positive measure that are mapped onto the same set. Without the loss of generality, we assume that all points of $A\cup B$ are points of approximative differentiability and Lebesgue points of the derivative. It follows that we can find a green square curve (see Fig.\ \ref{fig:LinkedSquares}) whose big part lies in $A$ and it is mapped to almost the same shape with only some small wiggles on the boundary. Similarly, we find a red square curve (mainly inside of $B$) which is mapped to a similar shape with only small wiggles. Original green and red squares curves are far away and they are not linked (see formal definition of the linking number and Fig.\ \ref{fig:linkingnumber} in Preliminaries) but as $A$ and $B$ are mapped to the same place we obtain that images of red and green square curves are linked, i.e.\ the red curve in the image is passing through the hole of the green curve. Since $f_k$ converge to $f$ weakly in $W^{1,p}$ for $p\geq 1$, we obtain that either $f_k$ converge uniformly for $p>1$ or $Df_k$ are uniformly integrable for $p=1$ on those carefully chosen 1D green and red square curves and hence similar picture is true also for some $f_k$ for $k$ high enough. However, this gives us a contradiction as each homeomorphism preserves the linking number, i.e.\ it cannot map two curves that are not linked to two curves that are linked. 

\noindent
{\bf Acknowledgement.}
The authors would like to thank Anna Doležalová and Dalimil Peša for their help with the proof of Lemma \ref{LemmaTwoSets}.

\section{Preliminaries}
Let $n\in\N$. By $\left\lfloor \tfrac n2 \right\rfloor$ we denote the floor of $\tfrac n2$, that is 
$$
\left\lfloor \tfrac n2 \right\rfloor := \max\left\{m\in\Z : m\leq \tfrac n2\right\}.
$$
Let $a\in \R^n$. We denote by $Q(a,r)$ the cube with the center $a$ and the side length $r$, that is 
$$
Q(a,r):=(a_1-\tfrac{r}2,a_1+\tfrac{r}2)\times\cdots\times(a_n-\tfrac{r}2,a_n+\tfrac{r}2).
$$
By $B(a,r)$ we denote the open ball with the center $a$ and the radius $r$. We use $\Id$ for the identity matrix in $\R^n$. The Lebesgue measure in $\R^n$ will be denoted by $|\,\smallbullet\,|$. For two points $x=(x_1,\ldots,x_n)$ and $y=(y_1,\cdots,y_n)$ in $\R^n$ we define
$$\|x-y\|_\infty\:=\max\left\{|x_i-y_i|:i\in\{1,\ldots, n\}\right\}.$$ 

\begin{definition}\label{de:Lusin N}
Let $f\in W^{1, 1}_{\rm loc}(\Omega, \R^n)$ be a mapping. We say that $f$ satisfies the Lusin \LN condition on the measurable set $F\subset\Omega$ if for every set $E\subset F$ with $|E|=0$ we have $|f(E)|=0$.
\end{definition}

\begin{definition}\label{de:injae}
 Let $f\:\Omega\to \R^n$ be a mapping. We say that $f$ is injective almost everywhere on $\Omega$, if there exists a subset $N\subset\Omega$, $|N|=0$ such that for every $x_1, x_2\in\Omega\setminus N$ with $x_1\neq x_2$, we have $f(x_1)=f(x_2)$.
\end{definition}

For a mapping $f\in W^{1, 1}_{\rm loc}(\Omega,  \R^n)$, the ACL-characterization implies the differential matrix $Df(x)$ exists for almost every $x\in\Omega$. The following lemma can be found as a special case in \cite[Theorem 6.2]{EG2015}. 
\begin{lemma}\label{le:ApD}
Let $f\in W^{1,1}_{\rm loc}(\Omega, \R^n)$ be a mapping. Then for almost every $x\in\Omega$, we have 
\begin{equation}\label{eq:ApD}
\lim_{r\to0}\aint_{B(x, r)\cap\Omega}\left|\frac{f(y)-f(x)-Df(x)\cdot(y-x)}{r}\right|dy=0.
\end{equation}
\end{lemma}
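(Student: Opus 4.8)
This is the classical statement that a Sobolev map is $L^1$-approximately differentiable almost everywhere, and, as indicated, it is a special case of \cite[Theorem~6.2]{EG2015}; I sketch a self-contained argument. The plan is to combine the Lebesgue differentiation theorem with the scale-invariant Poincar\'e inequality on balls. Applying Lebesgue's differentiation theorem component-wise to $f$ and to the weak derivative $Df\in L^1_{\rm loc}$, I obtain a set of full measure consisting of points $x\in\Omega$ at which: (a) the matrix $Df(x)$ is defined; (b) $x$ is a Lebesgue point of $Df$, so that $\eta(r):=\sup_{0<\rho\le r}\aint_{B(x,\rho)}|Df(y)-Df(x)|\,dy\to0$ as $r\to0$; and (c) $x$ is a Lebesgue point of $f$, so that $f(x)$ is well defined independently of the representative and $f_{B(x,\rho)}:=\aint_{B(x,\rho)}f\to f(x)$. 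I fix such an $x$ and restrict to $r$ so small that $B(x,r)\subset\Omega$, so that $B(x,r)\cap\Omega=B(x,r)$.

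Next I would introduce the affine map $\ell(y):=f(x)+Df(x)\cdot(y-x)$ and set $g:=f-\ell\in W^{1,1}_{\rm loc}(\Omega,\R^n)$, so that $Dg(y)=Df(y)-Df(x)$. Since each ball $B(x,\rho)$ is symmetric about $x$ one has $\ell_{B(x,\rho)}=f(x)$, hence $g_{B(x,\rho)}=f_{B(x,\rho)}-f(x)\to0$ by (c). The quantity under the limit in \eqref{eq:ApD} is exactly $r^{-1}\aint_{B(x,r)}|g(y)|\,dy$, which I split as
\begin{equation*}
\frac1r\aint_{B(x,r)}|g(y)|\,dy\le\frac1r\aint_{B(x,r)}\bigl|g(y)-g_{B(x,r)}\bigr|\,dy+\frac{|g_{B(x,r)}|}{r}.
\end{equation*}
For the first (oscillation) term, the Poincar\'e inequality on $B(x,r)$ gives $\aint_{B(x,r)}|g-g_{B(x,r)}|\le C(n)\,r\aint_{B(x,r)}|Dg|=C(n)\,r\aint_{B(x,r)}|Df-Df(x)|$, so this term is at most $C(n)\,\eta(r)\to0$.

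The part that requires care is the \emph{mean drift} $|g_{B(x,r)}|/r\to0$: the crude bound $|g_{B(x,r)}|\le C(n)r\aint_{B(x,r)}|Df|$ only yields $O(r)$, and this is where (b) is used in full. Since $g_{B(x,\rho)}\to0$, telescoping along dyadic radii gives $g_{B(x,r)}=\sum_{j\ge0}\bigl(g_{B(x,2^{-j}r)}-g_{B(x,2^{-j-1}r)}\bigr)$; writing the $j$-th term as an average of $g-g_{B(x,2^{-j}r)}$ over the smaller ball, enlarging that average to $B(x,2^{-j}r)$ at the cost of a factor $2^n$, and applying Poincar\'e,
\begin{equation*}
\bigl|g_{B(x,2^{-j}r)}-g_{B(x,2^{-j-1}r)}\bigr|\le 2^n\aint_{B(x,2^{-j}r)}\bigl|g-g_{B(x,2^{-j}r)}\bigr|\le 2^nC(n)\,2^{-j}r\aint_{B(x,2^{-j}r)}|Df-Df(x)|\le 2^nC(n)\,2^{-j}r\,\eta(r),
\end{equation*}
the last step using $2^{-j}r\le r$. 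Summing the geometric series gives $|g_{B(x,r)}|\le 2^{n+1}C(n)\,r\,\eta(r)$, so $|g_{B(x,r)}|/r\le 2^{n+1}C(n)\,\eta(r)\to0$. Combining the two estimates yields $r^{-1}\aint_{B(x,r)}|g|\le C'(n)\,\eta(r)\to0$, which is \eqref{eq:ApD}.

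I expect the mean-drift step to be the one genuine obstacle. Knowing only that $x$ is a Lebesgue point of $f$ controls $g_{B(x,r)}$ to order $o(1)$, not $o(r)$; the improvement to $o(r)$ relies essentially on $x$ being a Lebesgue point of the \emph{derivative}, which forces every Poincar\'e estimate at scales $\le r$ to carry the common vanishing factor $\eta(r)$, so that the telescoped series converges with a gain. Everything else is the standard Poincar\'e inequality on balls together with Lebesgue's differentiation theorem, and one may alternatively just invoke \cite[Theorem~6.2]{EG2015}.
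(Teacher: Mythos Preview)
Your argument is correct. The paper itself gives no proof of this lemma at all: it simply records the statement and points to \cite[Theorem~6.2]{EG2015}. What you have written is a clean, self-contained version of the standard proof (Poincar\'e on balls plus a dyadic chaining argument to upgrade the Lebesgue-point information on $Df$ to the $o(r)$ control of the mean drift $g_{B(x,r)}$), which is essentially the argument behind the cited reference. So there is nothing to compare against, and your sketch supplies strictly more detail than the paper does.
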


\begin{definition}
Let $\Omega\subset \R^n$ be a domain and $f\:\overline\Omega\to \R^n$ be a continuous mapping which is smooth in~$\Omega$. Let $J_f(x)\neq 0$ for each $x\in\Omega\cap f^{-1}(y)$. Then we define the topological degree of $f$ by setting
$$
{\rm deg}(f, \Omega, y):=\sum_{x\in\Omega\cap f^{-1}(y)}{\rm sgn}(J_f(x)).
$$
With an approximation argument, this definition can be extended to an arbitrary continuous mapping from $\overline\Omega$ to $ \R^n$. 
\end{definition}
We should emphasize that the degree only depends on values of $f$ on $\partial\Omega$, see \cite{degree:book} for more details. For homeomorphism $f\:\overline\Omega\to \R^n$, either ${\rm deg} (f, \Omega, y)=1$ for all $y\in f(\Omega)$ (we say that $f$ is sense preserving), or ${\rm deg}(f, \Omega, y)=-1$ for all $y\in f(\Omega)$ (so $f$ is sense reversing).

\subsection{Linking number}
Let us consider two simple closed curves in $\R^3$. The linking number of this pair describes the number of times that each curve winds around the other (considering orientation), see Fig.\ \ref{fig:linkingnumber}. It is well known that the linking number is topologically invariant (see e.g.\ \cite{HM2010}). For the introductions to the linking number in $\R^3$ and its application to the theory of knots see \cite{R1976}, and for the applications in the theory of Sobolev mappings see \cite{HM2010} and~\cite{HO2016}.

We need only the fact that if you have two circles (or boundaries of squares) that are not linked (for example if the second circle does not intersect the disc whose boundary is the first circle), then their images by a homeomorphism also cannot be linked.

\begin{figure}[H]
	\begin{center}
		\includegraphics[page=4]{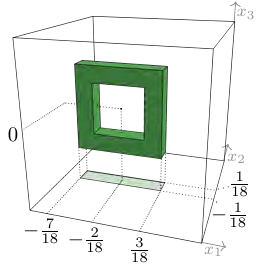}
	\end{center}
	\caption{Curves with linking numbers 0, 1 and 2}
	\label{fig:linkingnumber}
\end{figure}

The linking number can be defined even in the $n$-dimensional case. We now give the formal definition.
Let $n, p$ and $q$ be positive integers with $p+q=n-1$. Let us consider the 
mapping 
$\Phi(\xi,\eta)\colon
\overline{\mathbb B}_{p+1}\times\overline{\mathbb B}_{q+1}\to \R^n$
defined coordinate-wise as $\Phi(\xi,\eta)=x$, where
$$
\arraycolsep=0.7mm
\begin{array}{rcl}
	x_1&=&(2+\eta_1)\xi_1,\\
	&\vdots&\\
	x_{p+1}&=&(2+\eta_1)\xi_{p+1},\\
	x_{p+2}&=&\eta_2,\\
	&\vdots&\\
	x_{p+q+1}&=&\eta_{q+1}.
\end{array}
$$
Denote by $\mathbb A$ the anuloid 
$$\Phi(\mathbb S_p\times\mathbb B_{q+1})=\left\{x\in\R^n: \left(\sqrt{x_1^2+\cdots+x_{p+1}^2}-2\right)^2+x_{p+2}^2+\cdots+x_{n}^2< 1\right\} .$$ 
Of course given $x\in\overline{\mathbb A}$ we can find a unique $\xi\in \mathbb S_p$ and $\eta\in \overline{\mathbb B}_{q+1}$ such that 
$\Phi(\xi,\eta)=x$. We will denote these as $\xi(x)$ and $\eta(x)$. 

A \textit{link} is a pair $(\phi,\psi)$ of parametrized surfaces
$\phi\colon\mathbb S_p\to\R^n$, $\psi\colon\mathbb S_q\to\R^n$.
The \textit{linking number} of the link $(\phi,\psi)$ is 
defined as the topological degree 
$$
\lin (\phi,\psi)=\deg(L,\mathbb A,0),
$$
where the mapping
$L=L_{\phi,\psi}\:\overline{\mathbb A}\to \R^n$ is 
defined as $L(x)=\phi(\xi(x))-\bar\psi(-\eta(x))$, or equivalently
$$
 L(\Phi(\xi,\eta))=\phi(\xi)-\bar\psi(-\eta)\text{ for } \xi\in
\mathbb S_p, \eta\in\mathbb B_{q+1},
$$
where $\bar\psi$ is an arbitrary continuous extension of $\psi$ to 
$\overline{\mathbb B}_{q+1}$ (the degree does not depend on the way how we
extend $\psi$ as it depends only on the values on the boundary $\partial\mathbb A=\Phi(\mathbb S_p\times\mathbb S_q)$). 

\section{Proof of Theorem \ref{main}}
\begin{lemma}\label{LemmaTwoSets}
Let $f\in W^{1,1}_{\rm loc}(\Omega, \R^n)$ be a mapping with $J_f>0$ a.e.\ that is not injective almost everywhere, i.e. there is no set $N$ with $|N|=0$ so that $f$ is injective on $\Omega\setminus N$. Then there exist two measurable subsets $A, B\subset\Omega$ with $|A|>0$, $|B|>0$, $A\cap B=\emptyset$ and $f(A)=f(B)$.

Furthermore, if we take any $E\subset A$ (resp.\ $E\subset B$), $|E|>0$, then there exists a set $F\subset B$ (resp.\ $F\subset A$) such that $|F|>0$ and $f(E)=f(F)$.
\end{lemma}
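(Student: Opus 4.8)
The plan is to build the sets $A$ and $B$ from a measure-theoretic analysis of the fibers of $f$, exploiting the hypothesis $J_f > 0$ a.e. together with the failure of injectivity a.e. First I would introduce the multiplicity function $N(y) = N(f, \Omega, y) = \#\{x \in \Omega : f(x) = y\}$, which is measurable by standard area-formula considerations, and observe that since $J_f > 0$ a.e., the map $f$ satisfies the Lusin \LN condition on the set $\{J_f > 0\}$ (this is classical for Sobolev maps with nonvanishing Jacobian, e.g.\ via the approximate differentiability from Lemma \ref{le:ApD} and a covering argument), and hence satisfies \LN on all of $\Omega$ up to a null set; consequently the area formula holds and $\int_\Omega J_f \, dx = \int_{\R^n} N(y)\,dy$. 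Failure of injectivity a.e.\ means precisely that the set $G = \{y : N(y) \geq 2\}$ has $\int_G N(y)\,dy > 0$, equivalently $|f^{-1}(G)| > 0$ (again by the area formula, since $J_f>0$ a.e.). Actually one must be slightly careful: injectivity a.e.\ could fail while $|f^{-1}(G)|=0$ if the "bad" preimages form a null set — but then removing that null set restores injectivity a.e., contradicting the hypothesis. So we genuinely have $|f^{-1}(G)| > 0$.

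Next I would produce the two disjoint sets. On $f^{-1}(G)$, for a.e.\ fiber we have at least two preimages; the idea is to "split" each such fiber. Concretely, consider the set $f^{-1}(G) \subset \Omega$ and use a measurable selection / ordering argument: since $f^{-1}(G)$ has positive measure, pick a point of density and a small cube $Q$ with $|f^{-1}(G) \cap Q| > \tfrac12 |Q|$; then for the portion $E_0 = f^{-1}(G)\cap Q$ of positive measure, every $y \in f(E_0)$ still has a preimage outside $Q$ or we iterate. The cleaner route: define $A$ to be a maximal (by measure) measurable subset of $f^{-1}(G)$ on which $f$ is injective — such a maximal set exists by a Zorn/exhaustion argument using that $f$ restricted to a set where it is injective a.e.\ is governed by the area formula. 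Then $A' := f^{-1}(G) \setminus A$ must still have $|f(A')| $ equal to a positive-measure subset of $f(A)$, because every $y \in f(A')$ with $N(y)\geq 2$ still has a preimage in $A$ (by maximality of $A$, otherwise we could have enlarged $A$). Intersecting, $|A \cap f^{-1}(f(A'))| > 0$; set $B$ to be a subset of $A$ with $f(B) = f(A')$ up to null sets, then shrink $A'$ and $B$ simultaneously to a common measure-matched pair using the area formula — this gives disjoint $A, B$ with $f(A) = f(B)$ and positive measure. This is the step I expect to be the main obstacle: making the "split each fiber into two measurable pieces of comparable size" argument rigorous requires either a measurable selection theorem (e.g.\ the Aumann/von Neumann selection theorem applied to the graph of $f$, which is a Borel set) or a careful transfinite exhaustion, and one has to check all the measurability of the selected sets and that the area formula transfers measure correctly between $A$, $B$ and their common image. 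The acknowledgement to Doležalová and Peša suggests precisely this kind of delicate measure-theoretic bookkeeping.

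Finally, for the "furthermore" clause, suppose $E \subset A$ with $|E| > 0$. Since $f(A) = f(B)$ and — by the construction above — $f$ is injective on each of $A$ and $B$ separately (up to null sets), the restriction $f|_A$ and $f|_B$ are each essentially injective, so $\Psi := (f|_B)^{-1} \circ (f|_A)$ is a well-defined bimeasurable bijection from $A$ to $B$ (modulo null sets). Because both $f|_A$ and $f|_B$ satisfy the Lusin \LN condition (inherited from $f$, as $J_f>0$ a.e.) and, being essentially injective maps with positive Jacobian, also satisfy the converse Lusin condition ($|F|=0 \Rightarrow |f(F)|=0$ and, via the area formula with multiplicity one, $|f(F)|=0 \Rightarrow |F \cap A|=0$), the map $\Psi$ carries sets of positive measure to sets of positive measure. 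Hence $F := \Psi(E) \subset B$ satisfies $|F| > 0$ and $f(F) = f(E)$ by construction. The symmetric statement (starting from $E \subset B$) follows by the same argument with the roles of $A$ and $B$ exchanged. The only point needing care here is that $\Psi$ preserves null sets in both directions, which is exactly where $J_f > 0$ a.e.\ is used once more, through the area formula $\int_S J_f = |f(S)|$ valid for measurable $S$ on which $f$ is injective.
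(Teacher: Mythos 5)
The central construction in your proposal has a genuine gap, and it sits exactly where you yourself flag it. You reduce everything to the existence of a \emph{maximal} measurable subset $A\subset f^{-1}(G)$ on which $f$ is injective, and then deduce that ``every $y\in f(A')$ still has a preimage in $A$ by maximality.'' Neither half is secured. Maximality by inclusion via Zorn fails because the union of an uncountable chain of measurable sets need not be measurable, so chains have no upper bounds in the poset you need; maximality in the sense of attaining the supremum of measures is not shown to be attained (you cannot just take the union of a maximizing sequence, since unions destroy injectivity); and even granting a measure-maximal $A$, the pointwise conclusion does not follow: if some $y\in f(A')$ had no preimage in $A$, you could only enlarge $A$ by points of that single fiber, i.e.\ possibly by a null set, which does not contradict measure-maximality. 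To repair this you would need that the set $S=\{x\in A':\ f(x)\notin f(A)\}$, if of positive measure, contains a positive-measure subset on which $f$ is injective --- true here, but only through an extra ingredient you never invoke (e.g.\ Federer's decomposition of an a.e.\ approximately differentiable map with $J_f\neq 0$ a.e.\ into countably many pieces on which it is injective, together with care about the exceptional null set, whose image may have positive measure). A second recurring gap is measurability: you repeatedly form $f^{-1}$ of sets such as $G=\{y:\ N(y)\geq 2\}$, $f(A')$, $f(E)$, but the preimage of a Lebesgue measurable (non-Borel) set under a merely measurable map need not be measurable; this must be handled explicitly, e.g.\ by Lusin's theorem, replacing sets by countable unions of compacta on which $f$ is continuous so that the relevant images are $F_\sigma$. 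Finally, your ``furthermore'' argument needs $f$ to be essentially injective on \emph{both} final sets to define $\Psi=(f|_B)^{-1}\circ(f|_A)$, but your construction only provides injectivity on the piece coming from the maximal set, not on the piece inside $A'$.

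For contrast, the paper's proof avoids selection, maximality, the area formula, and any injectivity of $f|_A$ or $f|_B$ altogether. It fixes a countable family of bounded open sets $Q_k$ separating points, uses Lusin's theorem to find null sets $N_k$ making $f^{-1}(f(Q_k\setminus N_k))$ measurable (the image being $F_\sigma$), and observes that if every $M_k:=f^{-1}(f(Q_k\setminus N_k))\setminus Q_k$ were null, then $f$ would be injective off a null set; hence some $|M_{k_0}|>0$. Taking a compact $A\subset M_{k_0}$ with $f|_A$ continuous and setting $B:=f^{-1}(f(A))\cap(Q_{k_0}\setminus N_{k_0})$ gives disjointness and $f(A)=f(B)$ by construction, while $|B|>0$ and the ``furthermore'' clause (with $F:=f^{-1}(f(E))\cap B$) follow solely from $J_f>0$ a.e.\ and the Lusin \LN condition off a null set. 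Your area-formula framing could probably be completed along such lines, but as written the key splitting step is not a proof.
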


\begin{proof}\ \\
\vspace{-3mm}
\begin{enumerate}
\item Let $\{Q_k\}_{k=1}^\infty$ be an open cover of $\Omega$, such that every $Q_k$ is bounded and for any two points $x\neq y$ we have a set $Q_i$ such that $y\notin Q_i\ni x$. Such a cover clearly exists as we can imagine for example the system of dyadic cubes of all sizes (together with slightly shifted dyadic cubes to also cover their boundaries).

\item We claim that for every $k\in\N$ there is a set $N_k$, $|N_k|=0$, such that 
$$f^{-1}(f(Q_k\setminus N_k))\text{ is measurable.}$$
\end{enumerate}
Indeed, let $k$ be fixed. Using the Lusin theorem, for every $m\in\N$, we can find a compact set $C_m\subset Q_k$ such that $f|_{C_m}$ is continuous and $|Q_k\setminus C_m|<\frac1m$. Define $N_k:=Q_k\setminus \bigcup_{m=1}^\infty C_m$. We have $|N_k|=0$. 
Clearly $\{f(C_m)\}$ are compact sets, and therefore 
$$
\bigcup_{m=1}^\infty f(C_m) = f\left(\bigcup_{m=1}^\infty C_m\right)=f(Q_k\setminus N_k)$$
is Borel ($F_\sigma$). Therefore the set $f^{-1}(f(Q_k\setminus N_k))$ is measurable as a preimage of Borel set under the measurable mapping.
\vspace{4mm}

\begin{enumerate}
\addtocounter{enumi}{2}
\item Since $f$ is Sobolev, there is a null set $N\subset \Omega$ such that $f$ satisfies the Lusin \LN condition on $\Omega\setminus N$, see e.g.\ \cite[Section A.8]{HK2014}. We may assume that $N\cap Q_k \subset N_k$, and so the Lusin \LN condition holds on $\Omega\setminus \bigcup_{k=1}^\infty N_k$.
\vspace{3mm}

 \item We claim that there is $k_0 \in \N$, such that 
 $$
 M_{k_0}:= f^{-1}(f(Q_{k_0}\setminus N_{k_0}))\setminus Q_{k_0} 
 \text{ satisfies }|M_{k_0}|>0.
 $$
\end{enumerate}
Let us assume for contradiction that $|M_k|=0$ for every $k$. Since $f$ is not injective a.e., we know that $f$ is not injective on the set of full measure 
$$\Omega \setminus \bigcup_{k=1}^\infty (N_k\cup M_k).$$
So there must be two different points $x\neq y$ in this set such that $f(x)=f(y)$. We find $i\in\N$ such that $x\in Q_i$ and $y\notin Q_i$. Since $x\notin N_i$, we know that $y\in f^{-1}(f(Q_i\setminus N_i))$. And from $y\notin Q_i$ it follows that $y\in M_i$, which is a contradiction with $y\in \Omega \setminus \bigcup_{k=1}^\infty (N_k\cup M_k)$.
\vspace{2mm}

\begin{enumerate}
\addtocounter{enumi}{4}
 \item The construction of the sets $A$ and $B$. 
\end{enumerate}
By making use of Lusin's theorem again, we can fix a compact set $A\subset M_{k_0}$ with ${|A|>\frac12|M_{k_0}|>0}$, such that $f$ is continuous on $A$, $f(A)$ is compact and
$f^{-1}(f(A))$ is measurable. We define 
$$B:=f^{-1}(f(A))\cap (Q_{k_0}\setminus N_{k_0}).$$

Clearly, $B\subset Q_{k_0}$, $A\cap Q_{k_0}=\emptyset$, and every point in $f(A)$ has at least two points in its preimage, one in $A$ and one in $B$, so $f(A) = f(B)$.

Since $|A|>0$ and $J_f>0$ a.e., $|f(A)|>0$. Because of step $(3)$, we know that $f$ satisfies the Lusin \LN condition on $B$. Therefore $|f(A)|=|f(B)|>0$ implies $|B|>0$. 

\begin{enumerate}
\addtocounter{enumi}{5}
 \item \qm{Furthermore} 
\end{enumerate}
Let $E\subset A$ be measurable with $|E|>0$ (the case $E\subset B$ is similar). Similarly to the previous step, we fix $F\subset B$ as 
$$F:=f^{-1}(f(E))\cap B.$$
As in step (2) we can find a set $N_E$, $|N_E|=0$ such that $f(E\setminus N_E)$ is Borel. Because of the Lusin \LN condition, we know that $|f(N_E)|=0$. It can be easily checked that $f(E)=f(F)$. And as before, using the positivity of $J_f$ a.e.\ and the Lusin \LN condition we can obtain that $|F|>0$.
\end{proof}

\begin{proof}[Proof of Theorem \ref{main} for $n=3$] 
For simplicity, we give the details of the proof only for $n=3$ and we explain how to proceed in general dimensions in the last step. 
The proof is divided into several steps. It is by contradiction, so let us assume that the statement does not hold.

\step{1}{Finding disjoint sets \texorpdfstring{$A$}{A} and \texorpdfstring{$B$}{B} that are mapped onto each other.}

By Lemma \ref{LemmaTwoSets} there exist two measurable subsets $A, B\subset\Omega$ such that
$$
A\cap B=\emptyset, |A|>0, |B|>0, |f(A)|>0 \text{ and } f(A)=f(B)
$$
and

\begin{equation}\label{eq:clubsuit}
\begin{aligned}
 &\text{for every } E\subset A, |E|>0, \text{ there is a set } F\subset B\text{ with } |F|>0\text{ and } f(E)=f(F),\\[1mm]
 &\text{for every } F\subset A, |F|>0, \text{ there is a set } E\subset B\text{ with } |E|>0\text{ and } f(E)=f(F).
\end{aligned}	
\end{equation}

\step{2a}{Making \texorpdfstring{$A$}{A} and \texorpdfstring{$B$}{B} smaller with better properties: approximate differentiability and Lebesgue points.}

Since $f\in W^{1, p}(\Omega, \R^n)$ for $p\geq 1$, $|Df(x)|<\infty$ for almost every $x\in A\cup B$. And we know that $f$ is almost everywhere approximately differentiable by Lemma \ref{de:injae}. So (by making the sets smaller, but maintaining their positive measures) we can without the loss of generality assume that every point $x\in A\cup B$ satisfies \eqref{eq:ApD}, $|Df(x)|<\infty$ and $J_f(x)>0$ (because $J_f>0$ a.e.) and every point $x\in A\cup B$ is a Lebesgue point of $Df$. 

\step{2b}{Making \texorpdfstring{$A$}{A} and \texorpdfstring{$B$}{B} smaller with better properties: almost constant derivative.}

We define $\mathcal M_{\Q}$ to be the class of all $3\times 3$ matrices with positive determinant and rational entries, i.e. 
\begin{equation}\label{eq:matrix}
\mathcal M_\mathbb Q:=\left\{M=(a_{i,j})_{i, j=1,2,3}: \det(M)>0,\ a_{i, j}\in\Q\right\}.
\end{equation}
Let $\delta_{\mathrm{2b}}>0$ be a sufficiently small constant to be fixed later. 
The Jacobian of $f$ is positive everywhere on $A$, so we can decompose $A$ as
 $$
 A=\bigcup_{M\in\mathcal M_\Q} \underbrace{\left\{x\in A:|Df(x)-M|\leq\delta_{\mathrm{2b}}|M|\right\}}_{=:A^M}.
 $$
 
As $|A|>0$, we know that at least one of those countable many measurable sets needs to have a positive measure and hence there exists a matrix $M_A\in\mathcal M_\mathbb Q$ such that $\left| A^{M_A}\right|>0$. Using \eqref{eq:clubsuit} we can find the corresponding $F$ such that $F\subset B$, $|F|>0$ and $f(A^{M_A}) = f(F)$. 

Therefore we can assume that for all $x\in A$ we have 
\begin{align}
|Df(x)-M_A|&\leq\delta_{\mathrm{2b}}|M_A|.\label{almostConstantDonA}\\
\intertext{We can do the same trick for $B$, so for every $x\in B$ we can have}
|Df(x)-M_B|&\leq\delta_{\mathrm{2b}}|M_B|.\label{almostConstantDonB}
\end{align}
Furthermore, we can assume that every point of $A$ and $B$ is a point of density for the respective set.

\step{2c}{Making \texorpdfstring{$A$}{A} and \texorpdfstring{$B$}{B} smaller with better properties and \qm{improving} \texorpdfstring{$f$}{f}: \texorpdfstring{$Df(x)\approx \Id$}{Df(x) ≈ Id}.}

Let us fix two points $a\in A$ and $b\in B$ such that $f(a)=f(b)$. We can without the loss of generality assume that $f(a)=f(b)=0$, $\dist(a,b)>2$ and $a=0$.

Let us define 
$$
r_a:=\frac{\min\left\{\dist(a, b), \dist(a, \partial\Omega)\right\}}
{3\max\left\{|M_A^{-1}|, 1\right\}}\text{ and }
r_b:=\frac{\min\left\{\dist(a, b), \dist(b, \partial\Omega)\right\}}
{3\max\left\{|M_B^{-1}|, 1\right\}}. 
$$
Since $a$ is a point of density of $A$, we know that $|A\cap B(a, r_a)|>0$. Similarly, since $b$ is a point of density of $B$, we know that $|B\cap B(b, r_b)|>0$. Therefore, using \eqref{eq:clubsuit}, we can assume that 
\begin{equation}
a\in A\subset B(a, r_a) \text{ and } b\in B\subset B(b, r_b).
\end{equation}
Note that $\dist(A,B) > \frac13 \dist(a,b) > 0$.

Let $H$ be a bi-Lipschitz diffeomorphism of $\overline{\Omega}$ onto itself such that $H$ is identity on $\partial \Omega$ and
$$
H(x)=
\begin{cases}
M_A^{-1}(x-a)+a&\text{for }x\in B(a, r_a),\\[2mm]
M_B^{-1}(x-b)+b&\text{for }x\in B(b, r_b).
\end{cases}
$$

The compositions $f\circ H$ and $f_k\circ H$ together with \eqref{almostConstantDonA} and \eqref{almostConstantDonB} allow us to assume without the loss of generality that for every $x\in A\cup B$ it holds that 
$$ |Df(x)-\Id|<\delta_{\mathrm{2b}}.$$
By possibly slightly modifying $M_A$ and $M_B$ in this argument, we may assume that $Df(a)=Df(b)=\Id$. Note that still $f(a)=f(b)=0$. 

We know that $f_k\wto f$ in $W^{1, p}(\Omega, \R^n)$, hence $f_k\to f$ in $L^p$, and we can assume that 
\begin{equation}
	f_k(x)\to f(x)\text{ for every }x\in A\cup B
\end{equation}
(a subsequence of $f_k$ converges pointwise almost everywhere, and we can make the sets $A$ and $B$ smaller).

Let us fix $\epsilon_{\mathrm{2c}}>0$. Since $f$ is approximately differentiable at $a$ and $b$ (it is app.\ diff.\ everywhere on $A\cup B$), by \eqref{eq:ApD} there exist $r_1>0$ such that for every $0<r\leq r_1$ we have
\begin{equation}\label{eq:AD1}
	\begin{aligned}
		\aint_{Q(a, r)}\left| f(x)- f(a)-D f(a)(x-a)\right|\dx&<r \cdot \epsilon_{\mathrm{2c}},\\
		\aint_{Q(b, r)}\left| f(x)- f(b)-D f(b)(x-b)\right|\dx&<r \cdot \epsilon_{\mathrm{2c}}.
	\end{aligned}
\end{equation}
Since $a$ and $b$ are Lebesgue points of $Df$, there exists $r_2>0$ sufficiently small such that for every $0<r<r_2$, we have (remember that $Df(a)=Df(b)=\Id$)
\begin{equation}\label{eq:DisId}
\begin{aligned}
	\aint_{ Q(a, r)}\left|Df(x)-\Id\right|\dx&<\epsilon_{\mathrm{2c}},\\
	\aint_{ Q(b, r)}\left|Df(x)-\Id\right|\dx&<\epsilon_{\mathrm{2c}}.
\end{aligned}
\end{equation}

The points $a$ and $b$ are the points of density of $A\cup B$ (or they can be if we make the sets smaller). Therefore there is $r_3>0$ such that for every $0<r\leq r_3$ it holds that 
\begin{equation}\label{eq:dens}
	\begin{aligned}
		\left| Q(a, r)\setminus A\right|\leq\epsilon_{\mathrm{2c}}\left| Q(a, r)\right|,\\
		\left| Q(b, r)\setminus B\right|\leq\epsilon_{\mathrm{2c}}\left| Q(b, r)\right|.
	\end{aligned}
\end{equation}

Let us fix $0<r<\min\{r_1, r_2, r_3, r_a, r_b\}$. There exists a bi-Lipschitz homeomorphism $R_1\:\R^3\to\R^3$ which maps $Q_a:=Q(a,1)$ to $Q(a,r)$ and $Q_b:=Q(b,1)$ to $Q(b,r)$, $\dist(Q_a,Q_b)>0$ and $DR_1(x)=r\cdot \Id$ for every $x\in Q(a,r)\cup Q(b,r)$.
And let us define the mapping 
$$R_2(x):=\frac{x}{r},$$
 so $DR_2(x)=\frac{1}{r}\Id$.

Now consider the compositions $R_2\circ f\circ R_1$ and $R_2\circ f_k \circ R_1$. It can be easily checked that all of the previous nice properties are preserved, and so we can assume that \eqref{eq:AD1} and \eqref{eq:dens} hold for $r=1$. In the case of \eqref{eq:AD1} we get (remember that $f(a)=f(b)=0$, $Df(a)=Df(b)=\Id$ and $|Q_a|=1$)
\begin{equation}\label{eq:AD2}
	\begin{aligned}
		\int_{ Q_a}\left| f(x)- (x-a)\right|\dx&< \epsilon_{\mathrm{2c}},\\
		\int_{ Q_b}\left| f(x)- (x-b)\right|\dx&<\epsilon_{\mathrm{2c}}.
	\end{aligned}
\end{equation}

\step{3a}{Construction of a closed curve in \texorpdfstring{$Q_a$}{Q\_a} where \texorpdfstring{$f$}{f} is almost identity.}

We define 
$$
U_A:=\left\{
	x\in Q_a:
	|f(x)-(x-a)|\geq \sqrt{\epsilon_{\mathrm{2c}}}
	\right\}.
	$$
Thanks to \eqref{eq:AD2} we have $\left|U_A\right|\leq \sqrt{\epsilon_{\mathrm{2c}}}.$ We know that $|Q_a\setminus A|\leq \epsilon_{\mathrm{2c}}$ (see \eqref{eq:dens}). By removing $U_A$ from $A$, we may without the loss of generality assume that $A\cap U_A=\emptyset$, $A\subset Q_a$ and 
\begin{equation}\label{eq:AisBig}
	|A|>1-(\sqrt{\epsilon_{\mathrm{2c}}}+\epsilon_{\mathrm{2c}}).
\end{equation}

Let us assume that $a=0$. We define the annular type set $\mathbf{L}_A\subset Q_a$ by setting (see Fig.~\ref{fig:GreenLinkTube}, and also Fig.\ \ref{fig:RedGreenLinkTubes})
$$
\mathbf{L}_A:=\left\{
	x=(x_1,x_2,x_3)\in Q_a:
	x_2\in\left(-\tfrac{1}{18}, \tfrac{1}{18}\right),
	\left\|(x_1,x_3)-(\tfrac{-2}{18},0)\right\|_{\infty}\in \left(\tfrac3{18},\tfrac5{18}\right)
	\right\}.
$$

\begin{figure}[H]
	\begin{center}
		\includegraphics[page=1]{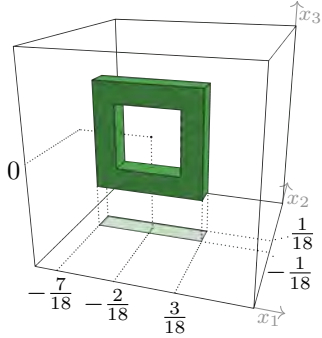}
	\end{center}
	\caption{The set $\mathbf{L}_A$ in $Q_a=Q(0,1)$.}
	\label{fig:GreenLinkTube}
\end{figure}

Then $\left|\mathbf{L}_A\right|=\frac{16}{729}>0$. 
For every $x_2\in\left(\frac{-1}{18}, \frac{1}{18}\right)$ and $r\in\left(\frac{3}{18}, \frac{5}{18}\right)$ we can define a closed simple curve $L_{x_2, r}\subset \mathbf{L}_A$ by setting 
$$
L_{x_2, r}:=
\left\{x=(x_1, x_2, x_3)\in Q_a :
\left\|(x_1,x_3)-(\tfrac{-2}{18},0)\right\|_{\infty} = r\right\}.
$$
Then $\mathcal H^1(L_{x_2, r})=8r\in\left(\frac{4}{3}, \frac{20}{9}\right)$ and $\mathbf{L}_A$ is a disjoint union of:
$$
\mathbf{L}_A=\bigcup_{\substack{x_2\in\left(\frac{-1}{18}, \frac{1}{18}\right),\\
 r\in\left(\frac{3}{18},\frac{5}{18}\right)}} L_{x_2, r}. 
$$

The ACL-characterization implies that for almost every $(x_2, r)\in\left(\frac{-1}{18}, \frac{1}{18}\right)\times\left(\frac{3}{18}, \frac{5}{18}\right)$ the restrictions of $f_k$ and $f$ onto the link $L_{x_2, r}$ belong to the Sobolev space $W^{1, p}(L_{x_2, r}, \R^n)$.
To simplify the notation, we still use $f_k$ and $f$ for the corresponding traces on the links~$L_{x_2, r}$.

Since $f_k\wto f$ in $W^{1, p}(Q_a)$, we may also assume that $f_{k}$ weakly converge to $f$ on $\mathbf{L}_A$ (where $f_k$ is a subsequence of the original sequence). Indeed, 
for a.e.\ $(x_2,r)$ there is a subsequence of $f_k$ which converges weakly to $f$ in $W^{1, p}(L_{x_2,r})$, see \cite[Lemma 2.9]{MS1995}. We therefore assume that on a.e.\ curve $f_k\wto f$ in $W^{1, p}(L_{x_2, r})$.
The inequality \eqref{eq:DisId} implies that for almost every $(x_2, r)\in\left(\frac{-1}{18}, \frac{1}{18}\right)\times\left(\frac{3}{18}, \frac{5}{18}\right)$ we have that 
\begin{equation}\label{eq:almostId}
\int_{L_{x_2, r}}\left|Df(x)-\Id\right|\dx<\epsilon_{\mathrm{2c}}.
\end{equation}
Because of \eqref{eq:AisBig}, the Fubini theorem implies that again for almost every $(x_2, r)$ we have 
\begin{equation}\label{eq:badIsSmall}
	\mathcal{H}^1(L_{x_2, r}\setminus A)<\epsilon_3,	
\end{equation}
where by taking $\epsilon_{\mathrm{2c}}$ sufficiently small, we can get an arbitrary small $\epsilon_3$.

\vspace{2mm}
We now fix $(x_2, r)$ which satisfies the previous conditions. From now on we assume that $p>1$, and at the end we explain how to fix the proof in the case $p=1$. Since $p>1$, we know that $W^{1,p}$ on the one-dimensional space on $L_{x_2, r}$ is compactly embedded into continuous functions and hence we can find a subsequence of $f_k$ (still denoted as $f_k$) that converges to $f$ uniformly on $L_{x_2, r}$. 
Let us fix $k_a\in\N$, such that for $k>k_a$ we have $|f_k(x)-f(x)|<\epsilon_{\mathrm{2c}}$ for all $x\in L_{x_2,r}$.

Let $x\in L_{x_2, r}\cap A$. Then $x\notin U_A$, and therefore 
\begin{equation}\label{eq:goodOnUA}
	|f_k(x)-x|\leq |f_k(x)-f(x)|+|f(x)-x|<\epsilon_{\mathrm{2c}} +\sqrt{\epsilon_{\mathrm{2c}}}.
\end{equation}

Let $x\in L_{x_2, r}\setminus A$. Since \eqref{eq:badIsSmall} holds, we know that there must be $y\in A\cap L_{x_2, r}$ such that $|x-y|<\frac{\epsilon_3}{2}$. Then we can estimate (since $f$ is ACL on $L_{x_2, r}$ and \eqref{eq:almostId})
\begin{equation}\label{eq:goodNotOnUA}
	\begin{aligned}
	|f_k(x)-x|&\leq |f_k(x)-f(x)|+|f(x)-f(y)|+|f(y)-y|+|y-x| \\
	&\leq \epsilon_{\mathrm{2c}}
	+ \Bigl(\int_{L_{x_2, r}\setminus A} |Df(t)|\dt\Bigr)
	 + \sqrt{\epsilon_{\mathrm{2c}}} 
	 +\epsilon_3
	 \\
	&\leq \epsilon_{\mathrm{2c}} + \sqrt{\epsilon_{\mathrm{2c}}} + \epsilon_3 +
	\int_{L_{x_2, r}\setminus A} \bigl(|Df(t)-\Id|+|\Id|\bigr)\dt \\
	&\leq \epsilon_{\mathrm{2c}} + \sqrt{\epsilon_{\mathrm{2c}}} +\epsilon_3+ \epsilon_{\mathrm{2c}} + \epsilon_3.
\end{aligned}
\end{equation}

The above estimates \eqref{eq:goodOnUA} and \eqref{eq:goodNotOnUA} imply that for arbitrary $\epsilon>0$ there exists a large enough $k_a\in \N$ such that for every $k>k_a$ and every $x\in L_{x_2, r}$, we have
\begin{equation}\label{eq:allIsGoodOnA}
|f_k(x)-x|<\epsilon.
\end{equation}

\step{3b}{Construction of a closed curve in \texorpdfstring{$Q_b$}{Q\_b} where \texorpdfstring{$f$}{f} is almost translation.}

Just as in the case of $Q_a$ we can define the set $\mathbf{L}_B$ for $Q_b$, see Fig.\ \ref{fig:RedLinkTube} and Fig.\ \ref{fig:RedGreenLinkTubes},
$$
\mathbf{L}_B:=\left\{
	x=(x_1,x_2,x_3)\in Q_a:
	x_3\in\left(-\tfrac{1}{18}, \tfrac{1}{18}\right),
	\left\|(x_1,x_2)-\left(\tfrac2{18},0\right)\right\|_{\infty}\in \left(\tfrac{3}{18}, \tfrac{5}{18}\right)
	\right\}.
$$
\begin{figure}[H]
	\begin{center}
		\includegraphics[page=2]{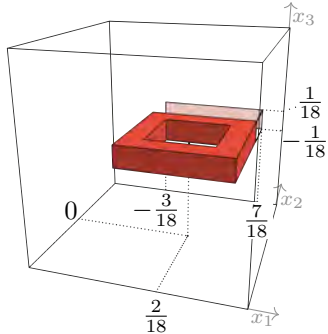}
	\end{center}
	\caption{The set $\mathbf{L}_B$ in $Q_b$, the coordinates are relative to the point $b$.}
	\label{fig:RedLinkTube}
\end{figure}

And as in the previous step, we can find a curve 
$\widehat{L}_{x_3,r}\subset \mathbf{L}_B$,
$$\widehat{L}_{x_3,r}:=
\left\{x=(x_1, x_2, x_3)\in Q_a :
\left\|(x_1,x_2)-(\tfrac2{18},0)\right\|_{\infty} = r\right\},
$$
such that for any $k>k_b$ and $x\in \widehat{L}_{x_3,r}$ we have 
\begin{equation}\label{eq:allIsGoodOnB}
|f_k(x)-x-b|<\epsilon.
\end{equation}

\step{4}{The contradiction.}

Since $Q_a$ and $Q_b$ are far away from each other, the curves $L_{x_2, r}$ and $\widehat{L}_{x_3,r}$ are not linked. 
If we move the sets $\mathbf{L}_A$ and $\mathbf{L}_B$ to the same cube $Q_a=Q(0,1)$, see Fig.\ \ref{fig:RedGreenLinkTubes}, we can see that the curves $L_{x_2, r}$ and $\widehat{L}_{x_3, r}-b$ are linked for any choce of $x_2$, $x_3$ and $r$. 
\begin{figure}[H]
	\begin{center}
		\includegraphics[page=3]{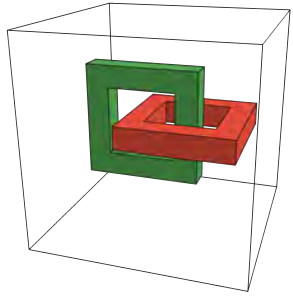}
	\end{center}
	\caption{The sets $\mathbf{L}_A$ and $\mathbf{L}_B$ moved to the same cube $Q(0,1)$.}
	\label{fig:RedGreenLinkTubes}
\end{figure}

Because of \eqref{eq:allIsGoodOnA} and \eqref{eq:allIsGoodOnB}, we can clearly choose $\epsilon>0$ such that the curves $f_k(L_{x_2, r})$ and $f_k(\widehat{L}_{x_3,r})$ are still linked for $k>\max\{k_a,k_b\}$, see Fig.\ \ref{fig:LinkedSquares}.
Since $f_k$ is a homeomorphism, the images of not-linked curves must not be linked, which gives us a contradiction.
\begin{figure}[H]
	\begin{center}
		\includegraphics[height=8.25cm, page=5]{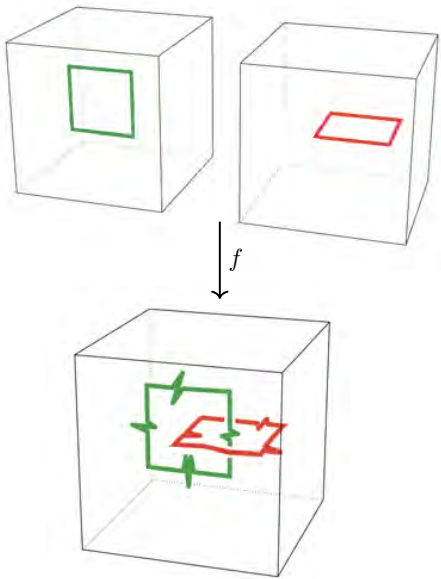}
	\end{center}
	\caption{The images of the curves $L_{x_2, r}$ and $\widehat{L}_{x_3,r}$ are linked.}
	\label{fig:LinkedSquares}
\end{figure}

\step{5}{The missing case \texorpdfstring{$p=1$}{p=1}.} In the case $p=1$ we have to slightly adjust our argument, as we may not have uniform convergence of $f_k$ on $L_{x_2,r}$. 
We know that $Df_k$ converge weakly to $Df$ in $L^1$, so by the de la Vallée Poussin characterization of weak convergence we obtain that $Df_k$ are uniformly integrable, i.e. 
\begin{equation}\label{defphi}
\text{ for any } \epsilon>0\, \text{ there is } \delta>0\, \text{ so that for any } k\in\N\:|A|<\delta\Rightarrow \int_A |Df_k|<\epsilon. 
\end{equation}

As in the previous part of the proof, we know that $f$ behaves on $L_{x_2,r}$ like in Fig \ref{fig:LinkedSquares}, so the mapping $f$ is really close to the identity on $L_{x_2,r}$. We fix $\delta>0$ and we study the \qm{ugly} set
$$
U:=\{x\in\Omega:\ |f_k(x)-f(x)|\geq \tfrac{1}{1000}\}.
$$
Since $f_k$ converge to $f$ pointwise a.e., we can find $k_0$ big enough so that for $k\geq k_0$ we have 
$
|\mathbf{L}_A\cap U|<\delta\, |\mathbf{L}_A|.
$

Now there are two possibilities. If
\begin{equation}\label{key}
\int_{U\cap L_{x_2,r}} |Df_k|<\frac{1}{1000},
\end{equation}
then we know that $f_k$ is close to $f$ on $L_{x_2,r}\setminus U$ by the definition of $U$. By \eqref{key} it cannot go far away on $U\cap L_{x_2,r}$ so it is close to $f$ on the whole $L_{x_2,r}$. 
It follows that the situation is as in Fig.\ \ref{fig:LinkedSquares} also for $f_k$, i.e.\ $f_k$ is really close to the identity mapping on $L_{x_2,r}$. If the same thing happens also on $\widehat{L}_{x_3,r}$, i.e.\ if the analogy of \eqref{key} holds also there, we get that $f_k$ is close to $f$ also on $\widehat{L}_{x_3,r}$ and we obtain the desired contradiction with the stability of linking in the same way as in previous step -- see Fig.\ \ref{fig:LinkedSquares}. 

So we may assume that \eqref{key} fails either for $L_{x_2,r}$ or for $\widehat{L}_{x_3,r}$. Without the loss of generality, we assume that it fails for $L_{x_2,r}$. And we may assume that it fails for any square $L_{x_2,r}\subset \mathbf{L}_A$ where our other conditions are satisfied and those squares occupy a big portion of $\mathbf{L}_A$, so we obtain that 
$$
\int_{U\cap \mathbf{L}_A} |Df_k|\geq C\text{\ \ but\ \ }
|\mathbf{L}_A\cap U|<\delta |\mathbf{L}_A|,
$$
which gives us a contradition with uniform integrability of $|Df_k|$ once we choose $\delta$ sufficiently small. 

\step{6}{Proof for the case when \texorpdfstring{$n=2$}{n=2} or \texorpdfstring{$n>3$}{n>3}.}

When $n=2$, De Philippis and Pratelli \cite[Lemma 2.6]{DPP2020} proved that if $\{f_k\}\subset W^{1,p}(\Omega,  \R^2)$ is a sequence of mappings which satisfy the \INV condition for $1\leq p<\infty$ and they converge weakly to $f\in W^{1, p}(\Omega,  \R^2)$ with $J_f>0$ almost everywhere, then $f$ also satisfies the \INV condition. Since homeomorphisms must satisfy the \INV condition, the Theorem~\ref{main} directly follows from their result.

When $n>3$, we can use a similar idea as in the proof for $n=3$. The only difference is the definition of the surfaces $L_{x_2, r}$ and $\widehat{L}_{x_3,r}$. In the case $n=3$, we had two curves -- one-dimensional surfaces. In the case of $n>3$, $n$ odd we need two $(\frac{n-1}2)$-dimensional surfaces. In the case of $n$ even we need one $(\frac{n}2-1)$-dimensional surface and one $(\frac{n}2)$-dimensional surface. The rest of the proof is the same, since in either case, we have that the dimension of either surface is not higher than $\left\lfloor \frac{n}{2}\right\rfloor<p$, and therefore $W^{1,p}$ is continuously embedded into continuous functions on those sets and we can use the same argument as in the case $n=3$.
\end{proof}

\bibliographystyle{plain}
\bibliography{Bibliography} 
\newpage
\end{document}